\documentclass{amsart}
\usepackage{amsmath}
\usepackage{amssymb}
\usepackage{array}
\usepackage{amscd}
\usepackage{tikz-cd}

\newtheorem{Th}{Theorem}
\newtheorem{Lemma}{Lemma}
\newtheorem{Prop}{Proposition}

\newtheorem{Cor}{Corollary}
\theoremstyle{definition}
\newtheorem{definition}{Definition}
\newtheorem*{definition*}{Definition}
\newtheorem{Remark}{Remark}
\newtheorem{Example}{Example}
\newcommand{\comm}[1]{}

%
\usepackage{color}
\usepackage[normalem]{ulem}
\definecolor{DarkGreen}{rgb}{0,0.5,0.1} 

\newcommand\soutD{\bgroup\markoverwith
{\textcolor{DarkGreen}{\rule[.5ex]{2pt}{1pt}}}\ULon}

\makeatletter
\newcommand*{\rom}[1]{\expandafter\@slowromancap\romannumeral #1@}
\makeatother

 \begin{document}

\title{Jordan constant for Cremona group of rank 2 over a finite field}

\author{Anastasia V.Vikulova}
\address{{\sloppy
\parbox{0.9\textwidth}{
Laboratory of Algebraic Geometry, National Research University Higher
School of Economics, 6 Usacheva str., Moscow, 119048, Russia.
\\[5pt]
Steklov Mathematical Institute of Russian
Academy of Sciences,
8 Gubkin str., Moscow, 119991, Russia
}\bigskip}}
\email{vikulovaav@gmail.com}
\thanks{This work is supported by the Russian Science Foundation under grant no.18-11-00121. }
\maketitle

\begin{abstract}
In this paper we find the exact value of the Jordan constant for Cremona group of rank $2$ over all finite fields. During the proof we construct a cubic surface over~$\mathbb{F}_2$ with a regular action of the group $\mathrm{S}_6$ which is the maximal automorphism group of cubic surfaces over~$\mathbb{F}_2.$ Moreover, we prove the uniqueness up to isomorphism of such a cubic surface.
\end{abstract}

 \section{Introduction}
The  Cremona group $\mathrm{Cr}_n(\textbf{F})$ of rank $n$ is a group of birational automorphisms of the projective space $\mathbb{P}^n$ over $\textbf{F}.$ Despite the fact that this group appears naturally, its structure is extremely complicated for $n \geqslant 2.$ Moreover, the description of finite subgroups seems very hard to obtain. Even in the first non-trivial case of rank~$2$ there is a description of conjugacy classes of finite subgroups only over $\mathbb{C}$ (see~\cite{DolgIsk}). Nevertheless, we can understand some properties  of finite subgroups of Cremona groups.

\begin{definition}[{\cite[Definition 2.1]{Popov}}]
A group $G$ is called \textit{Jordan} if there is a constant~$J$ such that any finite subgroup of the group $G$ has a normal abelian subgroup of index at most~$J.$ The minimal such constant $J$ is called the \textit{Jordan constant} of the group $G$ and is denoted by~$J(G).$  
\end{definition}

J.-P. Serre proved in~\cite[Theorem 5.3]{Serre} that the rank $2$ Cremona group~$\mathrm{Cr}_2(\textbf{F})$   over the field $\textbf{F}$ of characteristic zero is Jordan. However, it is not true for algebraically closed field $\textbf{F}$ of characteristic $p>0,$ because the group $\mathrm{Cr}_2(\textbf{F})$ contains simple subgroup $\text{PSL}_2(\mathbb{F}_{p^n})$ whose order grows with $n.$ In the article~\cite{PS} Yu.~Prokhorov and C.~Shramov proved that the Cremona group of rank $2$ over a finite field is Jordan. Moreover, they obtained the following inequalities for Jordan constants.

\begin{Th}[{\cite[Theorem 1.2]{PS}}]\label{ThPS}
  Let $J(\mathrm{Cr}_2(\mathbb{F}_q))$ be the Jordan constant for the Cremona group $\mathrm{Cr}_2(\mathbb{F}_q).$ Then we have
\begin{align*}
&J(\mathrm{Cr}_2(\mathbb{F}_q))=q^3(q^2-1)(q^3-1),  &\text{if $q \notin \{2,4,8\};$}\\
&J(\mathrm{Cr}_2(\mathbb{F}_q)) \leqslant 696\,729\,600,   &\text{if $q \in \{2,4,8\}.$}
\end{align*}

\end{Th}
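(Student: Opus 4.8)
The plan is to compute the supremum, over all finite subgroups $G \subset \mathrm{Cr}_2(\mathbb{F}_q)$, of the minimal index of a normal abelian subgroup of $G$, and to match it against $q^3(q^2-1)(q^3-1) = |\mathrm{PGL}_3(\mathbb{F}_q)|$. For the lower bound I would use a single well-chosen subgroup, namely $\mathrm{PGL}_3(\mathbb{F}_q) = \mathrm{Aut}(\mathbb{P}^2_{\mathbb{F}_q}) \subset \mathrm{Cr}_2(\mathbb{F}_q)$. Its only normal subgroups are $1$, $\mathrm{PSL}_3(\mathbb{F}_q)$ and $\mathrm{PGL}_3(\mathbb{F}_q)$, the latter two being nonabelian, so its only abelian normal subgroup is trivial. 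Hence $J(\mathrm{Cr}_2(\mathbb{F}_q)) \geqslant |\mathrm{PGL}_3(\mathbb{F}_q)| = q^3(q^2-1)(q^3-1)$ for every $q$.

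For the matching upper bound I would first regularize: every finite subgroup of $\mathrm{Cr}_2(\mathbb{F}_q)$ can be realized as a group acting biregularly on a smooth projective rational surface, and running the $G$-equivariant minimal model program reduces the analysis to a $G$-minimal surface $X$. By the Iskovskikh--Manin classification of minimal rational surfaces, such $X$ is either $\mathbb{P}^2$, a $G$-equivariant conic bundle over $\mathbb{P}^1$, or a del Pezzo surface with $\mathrm{Pic}(X)^G \cong \mathbb{Z}$. Since $G \hookrightarrow \mathrm{Aut}(X)$, it then suffices to bound the minimal normal-abelian index of the finite group of $\mathbb{F}_q$-automorphisms in each of these families.

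The main tool in each case is the action on the geometric Picard lattice $\mathrm{Pic}(X_{\overline{\mathbb{F}_q}})$: the group generated by $G$ together with the Frobenius maps to $W(E_{9-d})$ preserving the intersection form and $K_X$, with kernel the $\mathbb{F}_q$-points of the identity component of the automorphism group scheme. This coupling of the $G$-action with the arithmetic Frobenius is exactly what makes the relevant groups finite and their orders depend on $q$. For $\mathbb{P}^2$ it recovers $\mathrm{PGL}_3(\mathbb{F}_q)$ exactly (the Brauer group of a finite field being trivial, there are no nonsplit forms). For conic bundles and del Pezzo surfaces of degree $\geqslant 5$ the identity component is positive-dimensional (a torus or a $\mathrm{PGL}_2$-type group), and the large abelian subgroup of its $\mathbb{F}_q$-points keeps the normal-abelian index well below $q^3(q^2-1)(q^3-1)$; for del Pezzo surfaces of degree $\leqslant 4$ the automorphism group is finite and controlled by $W(E_{9-d}) \subseteq W(E_8)$.

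The heart of the argument is the quantitative comparison, and I expect it to be the principal obstacle: one must check, uniformly over all surface types and all $q$, that the bound coming from $\mathbb{P}^2$ dominates the others so that the lower bound is attained and $J(\mathrm{Cr}_2(\mathbb{F}_q)) = q^3(q^2-1)(q^3-1)$. The exceptional fields $\{2,4,8\} = \{2,2^2,2^3\}$ are precisely the small fields of characteristic $2$ in which certain del Pezzo surfaces acquire extra automorphisms---the prototype being a cubic surface over $\mathbb{F}_2$ with automorphism group $\mathrm{S}_6$ of order $720 > 168$---so that a non-$\mathbb{P}^2$ model can beat $\mathrm{PGL}_3(\mathbb{F}_q)$, and for these one can only assert the uniform estimate $J(\mathrm{Cr}_2(\mathbb{F}_q)) \leqslant |W(E_8)| = 696\,729\,600$. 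Pinning down the exact value for these three fields is exactly what motivates the detailed study of such exotic cubic surfaces undertaken in this paper.
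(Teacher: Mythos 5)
This statement is not proved in the paper at all: it is quoted verbatim from \cite[Theorem 1.2]{PS}, and the present paper's contribution begins exactly where it ends (replacing the bound $696\,729\,600$ for $q\in\{2,4,8\}$ by exact values). So the only comparison available is with the strategy of \cite{PS}, which the introduction summarizes and the proof of Theorem~\ref{jordantheorem} reuses. Your outline does follow that same strategy: regularization of a finite subgroup on a $G$-minimal rational surface, the trichotomy $\mathbb{P}^2$ / conic bundle / del Pezzo surface, and the lower bound $J(\mathrm{Cr}_2(\mathbb{F}_q))\geqslant |\mathrm{PGL}_3(\mathbb{F}_q)|=q^3(q^2-1)(q^3-1)$ from the fact that $\mathrm{PGL}_3(\mathbb{F}_q)$ has no non-trivial normal abelian subgroups; this half of your argument is complete and correct. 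But the upper bound --- the actual content of the theorem --- is asserted rather than proved: the statements that conic bundles and del Pezzo surfaces of the various degrees admit normal abelian subgroups of index at most $q^3(q^2-1)(q^3-1)$ (resp.\ $696\,729\,600$) are precisely \cite[Lemma 6.1]{PS}, \cite[Corollary 5.3]{PS} and the degree $1$ and $3$ analysis, and you explicitly defer this ``quantitative comparison'' as the principal obstacle. That deferred part is the theorem.

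Moreover, the mechanism you sketch would not suffice even if carried out. (i) For degree-$1$ del Pezzo surfaces, ``controlled by $W(\mathrm{E}_8)$'' gives at best the bound $|W(\mathrm{E}_8)|=696\,729\,600$, which \emph{exceeds} $q^3(q^2-1)(q^3-1)$ for every $q\leqslant 11$ --- e.g.\ $|\mathrm{PGL}_3(\mathbb{F}_9)|=42\,456\,960$ --- so it cannot yield the claimed equality for $q\in\{3,5,7,9,11\}$; one needs a genuinely finer estimate, such as the bound $2q^4(q-1)^2(q+1)$ of Proposition~\ref{dP1} obtained from the double cover $\phi_{|-2K_S|}\colon S\to\mathbb{P}(1,1,2)$, whose separability in characteristic $2$ is itself delicate (\cite{Dolgachev2}). (ii) Your appeal to a positive-dimensional identity component fails for degree-$5$ del Pezzo surfaces (whose automorphism group is finite, a subgroup of $\mathrm{S}_5$) and for minimal conic bundles with singular fibers (also finite); bounding the latter groups is the real work of \cite[Lemma 6.1]{PS}, not a formal consequence of a large abelian $\mathrm{Aut}^0(\mathbb{F}_q)$. (iii) Your diagnosis of the exceptional set $\{2,4,8\}$ via characteristic-$2$ cubic surfaces is accurate only for $q=2$: since $\mathrm{Aut}$ of a smooth cubic embeds in $W(\mathrm{E}_6)$ and $|W(\mathrm{E}_6)|=51\,840<60\,480=|\mathrm{PGL}_3(\mathbb{F}_4)|\leqslant|\mathrm{PGL}_3(\mathbb{F}_q)|$ for $q\in\{4,8\}$, cubics can never beat $\mathbb{P}^2$ over $\mathbb{F}_4$ or $\mathbb{F}_8$ (the paper makes exactly this computation in proving Theorem~\ref{jordantheorem}); the obstruction for those fields is degree $1$, consistently with the fact that $696\,729\,600=|W(\mathrm{E}_8)|$.
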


Our goal is to find the exact value of the Jordan constant for the Cremona group $\mathrm{Cr}_2(\mathbb{F}_q)$ for $q \in \{2,4,8\},$ where $\mathbb{F}_q$ is the field with $q$ elements. For this it is enough to study the groups of biregular automorphisms of del Pezzo surfaces and conic bundles over~$\mathbb{F}_q,$ because every finite subgroup $G$ of $\mathrm{Cr}_2(\mathbb{F}_q)$ is regularized on a $G$-minimal model of a rational surface. According to \mbox{\cite[Corollary 5.3]{PS}} and~\mbox{\cite[Lemma 6.1]{PS}}, the Jordan constant for groups acting biregularly on conic bundles over  $\mathbb{P}^1$ and del Pezzo surfaces of degrees~\mbox{$4 \leqslant d \leqslant 9$} and~$d=2$ is at most~\mbox{$q^3(q^2-1)(q^3-1)$} which is equal to the order of the automorphism group of $\mathbb{P}^2$ over $\mathbb{F}_q.$ Therefore, it is enough to study the group of regular automorphisms of del Pezzo surface of degrees $1$ and $3$ and their normal abelian subgroups.

In this paper we prove the following theorem.

\begin{Th}\label{jordantheorem}
The Jordan constant~$J(\mathrm{Cr}_2(\mathbb{F}_q))$ of the Cremona group $\mathrm{Cr}_2(\mathbb{F}_q)$ is equal to
$$
J(\mathrm{Cr}_2(\mathbb{F}_q)) = \begin{cases}
16\,482\,816, & \text{if $q=8;$}\\
60\,480, & \text{if $q=4;$}\\
720, & \text{if $q=2.$}
\end{cases}
$$
\end{Th}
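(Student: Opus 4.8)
The plan is to combine the reduction quoted above --- which caps the contribution of every admissible surface other than del Pezzo surfaces of degrees $1$ and $3$ by $q^3(q^2-1)(q^3-1)$ --- with a direct analysis of these two remaining degrees. Throughout write $j(H)$ for the minimal index of a normal abelian subgroup of a finite group $H$, so that $J(\mathrm{Cr}_2(\mathbb{F}_q))$ is the supremum of $j(H)$ over all finite $H\leqslant\mathrm{Cr}_2(\mathbb{F}_q)$. First I record the baseline coming from the plane: one computes $q^3(q^2-1)(q^3-1)=|\mathrm{PGL}_3(\mathbb{F}_q)|$, and for $q\in\{2,4,8\}$ the group $\mathrm{PGL}_3(\mathbb{F}_q)$ has trivial centre with unique minimal normal subgroup the simple non-abelian group $\mathrm{PSL}_3(\mathbb{F}_q)$; hence it admits no non-trivial normal abelian subgroup and $j(\mathrm{PGL}_3(\mathbb{F}_q))=q^3(q^2-1)(q^3-1)$. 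As $\mathbb{P}^2$ is admissible this yields the lower bound $J(\mathrm{Cr}_2(\mathbb{F}_q))\geqslant q^3(q^2-1)(q^3-1)$, and it remains to show that degrees $1$ and $3$ do not exceed it, except for the single gain at $q=2$.

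For a smooth cubic surface $S$ the action on the geometric Picard lattice, equivalently on the configuration of $27$ lines, is faithful, so $\mathrm{Aut}(S_{\overline{\mathbb{F}}_q})\hookrightarrow W(E_6)$ and $\mathrm{Aut}_{\mathbb{F}_q}(S)$ is the centraliser of the Frobenius inside this image; in particular $|\mathrm{Aut}_{\mathbb{F}_q}(S)|\leqslant|W(E_6)|=51\,840$. For $q\in\{4,8\}$ this is already below $q^3(q^2-1)(q^3-1)$, so cubic surfaces contribute nothing new there. The decisive case is $q=2$, where $q^3(q^2-1)(q^3-1)=168$. Here I would exhibit an explicit smooth cubic surface over $\mathbb{F}_2$ whose automorphism group is the full symmetric group $\mathrm{S}_6$, realised inside $W(E_6)$ as the subgroup $W(A_5)$ permuting a sixer of skew lines, verify that $\mathrm{S}_6$ has only the trivial normal abelian subgroup so that $j(\mathrm{S}_6)=720$, and then prove both that $720$ is the maximal order of the automorphism group of any cubic surface over $\mathbb{F}_2$ and that the extremal surface is unique up to isomorphism. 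Since $j(H)\leqslant|H|$, this caps the degree-$3$ contribution at exactly $720$ when $q=2$.

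For del Pezzo surfaces of degree $1$ the anticanonical geometry embeds $\mathrm{Aut}(S)$ into $W(E_8)$ and realises $S$ as a sextic in the weighted projective space $\mathbb{P}(1,1,2,3)$, that is, as a double cover of a quadric cone branched along a sextic (an Artin--Schreier cover when $q$ is even); every automorphism is controlled by its action on the cone together with the covering involution. I would use this description to enumerate the groups $\mathrm{Aut}_{\mathbb{F}_q}(S)$ that actually occur for $q\in\{2,4,8\}$ and to verify that $j(H)$ never exceeds $q^3(q^2-1)(q^3-1)$ for $q\in\{4,8\}$, nor $720$ for $q=2$. Combining the three bounds, the supremum of $j$ over all admissible surfaces equals $q^3(q^2-1)(q^3-1)$ for $q\in\{4,8\}$, attained on $\mathbb{P}^2$, and equals $720$ for $q=2$, attained on the exceptional cubic, which is the asserted value.

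The hard part will be the degree-$3$ case over $\mathbb{F}_2$. Establishing the sharp bound $|\mathrm{Aut}(S)|\leqslant 720$ --- in place of the crude $51\,840$ --- together with the existence and uniqueness of the $\mathrm{S}_6$-surface, is a genuinely characteristic-$2$ phenomenon: over a field of characteristic $0$ the largest automorphism group of a cubic surface has order $648<720$, so the classical classification cannot be quoted, and one must argue directly in characteristic $2$ through explicit equations and the combinatorics of the $27$ lines. The characteristic-$2$ behaviour of the degree-$1$ double cover is a secondary, more technical difficulty.
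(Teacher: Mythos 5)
Your overall architecture coincides with the paper's: the same reduction to del Pezzo surfaces of degrees $1$ and $3$, the same baseline $j(\mathrm{PGL}_3(\mathbb{F}_q))=q^3(q^2-1)(q^3-1)$ realized on $\mathbb{P}^2$, the same use of $|W(\mathrm{E}_6)|=51\,840<60\,480=|\mathrm{PGL}_3(\mathbb{F}_4)|$ to dispose of cubic surfaces for $q\in\{4,8\}$, and the same identification of the crux at $q=2$. The problem is that the crux is only announced, not proved, and the one concrete idea you offer for it would fail. You propose to realize the extremal $\mathrm{S}_6$ inside $W(\mathrm{E}_6)$ as the sixer-stabilizer $W(\mathrm{A}_5)$. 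But a group of automorphisms of a smooth cubic surface that stabilizes a sixer contracts it equivariantly, hence embeds into $\mathrm{Aut}(\mathbb{P}^2_{\overline{\mathbb{F}}_2})=\mathrm{PGL}_3(\overline{\mathbb{F}}_2)$; and $\mathrm{S}_6$ has no faithful three-dimensional projective representation in characteristic $2$ (only $\mathrm{A}_6$ embeds, via its triple cover inside $\mathrm{PSL}_3(\mathbb{F}_4)$, and the relevant outer automorphism is not realized there). So no cubic surface carries an $\mathrm{S}_6$ permuting a sixer. The copy of $\mathrm{S}_6$ that actually occurs is the other conjugacy class, lying in the simple subgroup $W(\mathrm{E}_6)'\simeq\mathrm{PSU}_4(\mathbb{F}_2)$: the paper takes the surface $x^2t+y^2z+z^2y+t^2x=0$ over $\mathbb{F}_2$, writes its equation as $(x^2,y^2,z^2,t^2)^{T}\Omega\,(x,y,z,t)$ for the standard symplectic form $\Omega$, and concludes that $\mathrm{PSp}_4(\mathbb{F}_2)\simeq\mathrm{S}_6$ acts on it.

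Similarly, for the sharp bound $|\mathrm{Aut}(S)|\leqslant 720$ over $\mathbb{F}_2$ you only gesture at ``explicit equations and the combinatorics of the $27$ lines,'' whereas a complete argument is needed and the paper's is short and purely group-theoretic: $\mathrm{Aut}(S)$ embeds simultaneously into $\mathrm{PGL}_4(\mathbb{F}_2)$ (via the anticanonical embedding) and into $W(\mathrm{E}_6)$, and running through the Atlas list of maximal subgroups of $\mathrm{PGL}_4(\mathbb{F}_2)$ one checks that every possibility other than $\mathrm{S}_6$ forces $|\mathrm{Aut}(S)|\leqslant\gcd$ of the relevant orders with $|W(\mathrm{E}_6)|$, which is at most $360$ or $192$, both below $720$. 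Without this step (or a substitute) the value $720$ at $q=2$, and hence the theorem, is not established. Two smaller points: the uniqueness of the extremal cubic, which you list among your goals, is not needed for the Jordan constant (the upper bound plus existence plus $j(\mathrm{S}_6)=720$ suffice; the paper proves uniqueness separately in the appendix); and for degree $1$ your proposed enumeration of all occurring automorphism groups is far more than necessary --- the paper simply bounds $|\mathrm{Aut}(S)|\leqslant 2\,|\mathrm{Aut}(\mathbb{P}(1,1,2))|=2q^4(q-1)^2(q+1)<q^3(q^2-1)(q^3-1)$ using the degree-two map given by $|-2K_S|$, which is exactly the ``action on the cone plus covering involution'' description you mention.
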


\begin{Cor}
The Jordan constant~$J(\mathrm{Cr}_2(\mathbb{F}_q))$ for the Cremona group $\mathrm{Cr}_2(\mathbb{F}_q)$ is equal to
$$
J(\mathrm{Cr}_2(\mathbb{F}_q)) = \begin{cases}
|\textnormal{PGL}_3(\mathbb{F}_q)|, & \text{if $q \neq 2;$}\\
|\mathrm{S}_6| > 168=|\textnormal{PGL}_3(\mathbb{F}_2)|, & \text{if $q=2.$}
\end{cases}
$$
\end{Cor}

As a complement to the proof of Theorem \ref{jordantheorem} we prove the following fact about the automorphisms of cubic surface over  $\mathbb{F}_2.$

\begin{Th}\label{th:15S_6}
Let $S$ be a smooth cubic surface in $\mathbb{P}^3$ over $\mathbb{F}_2.$ Then 
$$
|\mathrm{Aut}(S)| \leqslant 720.
$$
\noindent If the equality holds then we have $\mathrm{Aut}(S) \simeq \mathrm{S}_6.$ Moreover, the cubic surface with such automorphism group is unique up to isomorphism.
\end{Th}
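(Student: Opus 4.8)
The plan is to bound the automorphism group of a smooth cubic surface over $\mathbb{F}_2$ by combining the classical theory of the $27$ lines with a finiteness argument over the prime field. First I would recall that for any smooth cubic surface $S \subset \mathbb{P}^3$, the group $\mathrm{Aut}(S)$ acts on the configuration of the $27$ lines, and over $\overline{\mathbb{F}}_2$ the geometric automorphism group embeds into the Weyl group $W(\mathrm{E}_6)$, which has order $51\,840$ and acts on the lines through its index-$2$ simple subgroup structure. The key point is that $\mathrm{Aut}(S)$ over $\mathbb{F}_2$ must preserve the set of $\mathbb{F}_2$-rational lines and more generally commute with the Frobenius, so it lands in the centralizer-type subgroup of $W(\mathrm{E}_6)$ compatible with the Frobenius action on the Picard lattice.

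Next I would make the enumeration effective. Since $\mathbb{P}^3$ over $\mathbb{F}_2$ has only finitely many points (namely $|\mathbb{P}^3(\mathbb{F}_2)| = 15$) and finitely many cubic forms up to the finite group $\mathrm{PGL}_4(\mathbb{F}_2)$, there are only finitely many isomorphism classes of smooth cubic surfaces over $\mathbb{F}_2$. I would therefore either invoke an explicit classification (e.g.\ via counting point numbers $|S(\mathbb{F}_2)|$, which are constrained by the Weil conjectures and the trace of Frobenius on the $27$-line lattice) or argue abstractly: the orbit of each $\mathbb{F}_2$-rational point of $S$ under $\mathrm{Aut}(S)$ has size dividing $|S(\mathbb{F}_2)|$, and a faithful action on a small point set bounds the group. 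The number $720 = |\mathrm{S}_6| = 6!$ suggests the extremal surface has a configuration with an $\mathrm{S}_6$-symmetry; I expect this is realized by a surface whose $27$ lines organize into the classical $6+15+6$ pattern (the two sixes of Schläfli double-sixes together with the $15$ lines), on which $\mathrm{S}_6$ acts as the symmetries of the $6$ points of a blow-up model $\mathrm{Bl}_6\mathbb{P}^2$, permuting six points in general position defined over $\mathbb{F}_2$.

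For the inequality $|\mathrm{Aut}(S)| \leqslant 720$ I would argue that $\mathrm{Aut}(S)$ is a subgroup of $W(\mathrm{E}_6)$ stabilizing the Frobenius conjugacy class, and then show case-by-case (indexed by the Frobenius action, equivalently by the factorization type on the $27$ lines) that no such subgroup exceeds $720$ unless the Frobenius acts trivially on the lines, i.e.\ all $27$ lines are rational. In the all-rational case the surface is a blow-up of $\mathbb{P}^2$ at six $\mathbb{F}_2$-points in general position, and $\mathrm{Aut}(S)$ is governed by the permutations of these six points realizable by $\mathrm{PGL}_3(\mathbb{F}_2)$-geometry; the maximal such group is exactly $\mathrm{S}_6$, giving $720$. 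To get the abstract isomorphism $\mathrm{Aut}(S) \simeq \mathrm{S}_6$ at equality, I would identify the full permutation action on the six points and verify it is realized, which also pins down the surface.

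The main obstacle I anticipate is the uniqueness claim: showing that the extremal configuration of six points in general position in $\mathbb{P}^2(\mathbb{F}_2)$ with full $\mathrm{S}_6$-symmetry exists and is unique up to $\mathrm{PGL}_3(\mathbb{F}_2)$. Over $\mathbb{F}_2$ there are only $15$ points in $\mathbb{P}^2$, and ``general position'' (no three collinear, not all six on a conic) is a severe constraint, so I would need to check that such a configuration exists at all, that its stabilizer realizes all of $\mathrm{S}_6$ rather than a proper subgroup, and that any two such configurations are projectively equivalent. This combinatorial/geometric rigidity over the smallest field is the delicate part; once the point configuration is shown unique, the cubic surface is its anticanonical embedding and hence unique, and the automorphism group computation follows. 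I would likely supplement the synthetic argument with the explicit $\mathrm{S}_6$-action constructed elsewhere in the paper (promised in the abstract) to both exhibit the surface and confirm $|\mathrm{Aut}(S)| = 720$ is attained.
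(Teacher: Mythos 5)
Your strategy for the equality case rests on a configuration that does not exist over $\mathbb{F}_2$, and this breaks the argument at its central point. First, $\mathbb{P}^2(\mathbb{F}_2)$ has $7$ points, not $15$ (you have confused it with $\mathbb{P}^3(\mathbb{F}_2)$), and any six of the seven points of the Fano plane contain the four full lines avoiding the omitted point; hence there are \emph{no} six $\mathbb{F}_2$-points of $\mathbb{P}^2$ in general position, and no smooth cubic surface over $\mathbb{F}_2$ is a blow-up of six rational points. Equivalently, if Frobenius acted trivially on the $27$ lines, the trace of Frobenius on the geometric Picard lattice would be $7$, giving $|S(\mathbb{F}_2)|=q^2+7q+1=19$, which is absurd since $|\mathbb{P}^3(\mathbb{F}_2)|=15$. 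So the case you single out as the extremal one is empty, and your scheme would ``prove'' that the bound $720$ is never attained --- contradicting the existence of the surface $x^2t+y^2z+z^2y+t^2x=0$, on which $\mathrm{PSp}_4(\mathbb{F}_2)\simeq\mathrm{S}_6$ acts. Second, the complementary half of your case analysis also fails as stated: for a non-trivial Frobenius the centralizer in $W(\mathrm{E}_6)$ can still exceed $720$. When Frobenius acts as a reflection, its centralizer has order $1440$ (isomorphic to $\mathbb{Z}/2\mathbb{Z}\times\mathrm{S}_6$), and this class is perfectly compatible with the point count (it gives $|S(\mathbb{F}_2)|=15$); this is in fact exactly the situation of the extremal surface. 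Frobenius-equivariance alone therefore cannot yield the bound $720$; one needs the second, independent constraint that $\mathrm{Aut}(S)$ embeds into $\mathrm{PGL}_4(\mathbb{F}_2)$, together with concrete knowledge of the subgroups of that group. A further, smaller inaccuracy: even when a blow-down model exists, $\mathrm{Aut}(S)$ is not ``governed by permutations of the six points,'' since automorphisms need not preserve any chosen blow-down structure.

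The paper's proof avoids lines and blow-up models entirely. For the bound, it intersects the two constraints $\mathrm{Aut}(S)\subset W(\mathrm{E}_6)$ and $\mathrm{Aut}(S)\subset\mathrm{PGL}_4(\mathbb{F}_2)$: running through the maximal subgroups of $\mathrm{PGL}_4(\mathbb{F}_2)$ from the ATLAS and taking gcd's of orders with $|W(\mathrm{E}_6)|$ shows $|\mathrm{Aut}(S)|\leqslant 720$, with $\mathrm{S}_6$ the only subgroup of order $720$. Existence is the explicit equation above, invariant under $\mathrm{PSp}_4(\mathbb{F}_2)\simeq\mathrm{S}_6$ because its left-hand side is $(x^2,y^2,z^2,t^2)^{T}\Omega(x,y,z,t)$ for a symplectic form $\Omega$. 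Uniqueness is a fixed-point and counting argument: if $\mathrm{S}_6$ acts, every $\mathbb{F}_2$-point orbit must have length $15$ (any other length forces a point stabilizer containing $\mathbb{Z}/5\mathbb{Z}$ acting faithfully on the tangent space, impossible since $5\nmid|\mathrm{GL}_3(\mathbb{F}_2)|$, while lengths $5$ and $10$ are excluded by subgroup indices of $\mathrm{S}_6$), so $S$ passes through all $15$ points; then among the $63$ cubics through all $15$ points at least $35$ are singular (three planes through a line), so the $\mathrm{PGL}_4(\mathbb{F}_2)$-orbit of a smooth one has at most $28$ elements, forcing $|\mathrm{Aut}(S)|\geqslant 20160/28=720$ and a single isomorphism class. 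If you want to salvage your own approach, you would need to replace the ``all lines rational'' endpoint by the correct extremal Frobenius class and then import the $\mathrm{PGL}_4(\mathbb{F}_2)$ subgroup analysis anyway, at which point you have reproduced the paper's argument.
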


\vspace{5mm}

\textbf{Acknowledgment.} The author wish to warmly thank C.~A.~Shramov for suggesting this problem, a lot of discussion and constant attention to this work. Also the author would like to thank A.~S.~Trepalin for the interesting conversations about cubic surfaces and Yu.~G.~Prokhorov for important remarks.

\section{Smooth cubic surfaces}
It is well-known that the automorphism group of a smooth cubic surface in $\mathbb{P}^3$ is a subgroup in the Weyl group~$W(\text{E}_6)$ (see, for instance,~\cite[Corollary 8.2.40]{DolgClass}). Recall that
$$
|W(\text{E}_6)|=2^7 \cdot 3^4 \cdot 5=51\,840.
$$
\noindent According to the classification of the automorphism groups of smooth cubic surfaces over an algebraically closed field (see~\cite[Table 1]{DolgachevDuncan}), we get that the largest automorphism group of a cubic surface over the algebraically closed field $\overline{\mathbb{F}}_2$ is $\text{PSU}_4(\mathbb{F}_2).$ This group is the automorphism group of the Fermat cubic $S \subset \mathbb{P}^3$ (see~\cite[Table~8]{DolgachevDuncan}) which is defined by the equation 
$$
x^3+y^3+z^3+t^3=0.
$$

\noindent Recall that $|\text{PSU}_4(\mathbb{F}_2)|=2^6 \cdot 3^4 \cdot 5=25\,920.$

However, the situation changes if the base field is $\mathbb{F}_2.$ We prove that the largest automorphism group of a smooth cubic surface over $\mathbb{F}_2$ is of order $720.$

\begin{Th}\label{th:maxS_6}
Let $S$ be a smooth cubic surface in $\mathbb{P}^3$ over $\mathbb{F}_2$. Then we have 
$$
|\textnormal{Aut}(S)| \leqslant 720.
$$
\noindent Moreover, if the order of $\mathrm{Aut}(S)$ is equal to $720$, then we have~$\textnormal{Aut}(S) \simeq \mathrm{S}_6.$
\end{Th}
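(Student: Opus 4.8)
The plan is to bound $\mathrm{Aut}(S)$ by combining two faithful actions of it. First I would use that a smooth cubic surface is anticanonically embedded, so that every biregular automorphism of $S$ is induced by a linear automorphism of the ambient $\mathbb{P}^3$: since the restriction map $H^0(\mathbb{P}^3,\mathcal{O}(1)) \to H^0(S,-K_S)$ is an isomorphism, this gives a faithful representation
$$
\mathrm{Aut}(S) \hookrightarrow \textnormal{PGL}_4(\mathbb{F}_2) = \mathrm{GL}_4(\mathbb{F}_2) \cong \mathrm{A}_8,
$$
of order $|\mathrm{GL}_4(\mathbb{F}_2)| = (2^4-1)(2^4-2)(2^4-4)(2^4-8) = 2^{6}\cdot 3^{2}\cdot 5\cdot 7 = 20\,160$. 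On the other hand, as recalled above, the action on the Picard lattice gives a faithful embedding $\mathrm{Aut}(S) \hookrightarrow W(\text{E}_6)$, of order $2^{7}\cdot 3^{4}\cdot 5 = 51\,840$. Since $\mathrm{Aut}(S)$ admits both representations, its order divides
$$
\gcd(20\,160,\,51\,840)=2^{6}\cdot 3^{2}\cdot 5=2\,880.
$$
The key arithmetic gain here is that $W(\text{E}_6)$ carries no automorphism of order divisible by $7$, so the prime $7$ is eliminated and the $3$-part is capped at $3^{2}$.

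The second step is purely group-theoretic. I would enumerate the divisors of $2\,880$ exceeding $720$; they are exactly $960=2^{6}\cdot 3\cdot 5$, $1\,440=2^{5}\cdot 3^{2}\cdot 5$ and $2\,880=2^{6}\cdot 3^{2}\cdot 5$. Because $|\mathrm{Aut}(S)|\mid 2\,880<20\,160$, the group $\mathrm{Aut}(S)$ is a proper subgroup of $\mathrm{A}_8\cong\mathrm{GL}_4(\mathbb{F}_2)$ and hence lies in some maximal subgroup. The maximal subgroups of $\mathrm{A}_8$ of order greater than $720$ are $\mathrm{A}_7$, of order $2\,520$, and the stabilizers $2^{3}{:}\mathrm{GL}_3(\mathbb{F}_2)$ of a point or a plane of $\mathbb{P}^3(\mathbb{F}_2)$, of order $1\,344$. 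A subgroup of order $2\,880$ is impossible since $2\,880>2\,520$; one of order $1\,440$ is impossible since the only maximal order at least $1\,440$ is $2\,520$ and $1\,440\nmid 2\,520$; one of order $960$ is impossible since $960\nmid 2\,520$ and $960\nmid 1\,344$. Hence none of $960,1\,440,2\,880$ occurs, and therefore $|\mathrm{Aut}(S)|\leqslant 720$.

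For the equality case I would argue similarly. If $|\mathrm{Aut}(S)|=720$, then $\mathrm{Aut}(S)$ lies in a maximal subgroup $M$ of $\mathrm{A}_8$ with $720\mid |M|$; but neither $2\,520$ nor $1\,344$ is divisible by $720$, so the only possibility is $|M|=720$, i.e. $\mathrm{Aut}(S)=M$ is itself maximal. Since $\mathrm{A}_8\cong\mathrm{GL}_4(\mathbb{F}_2)$ has a single conjugacy class of maximal subgroups of order $720$, realized by the symplectic group $\mathrm{Sp}_4(\mathbb{F}_2)\cong \mathrm{S}_6$, I conclude $\mathrm{Aut}(S)\simeq \mathrm{S}_6$ whenever the bound is attained.

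In this argument the conceptual content is light and is concentrated in the input I would cite rather than reprove: the classification of maximal subgroups of $\mathrm{A}_8\cong\mathrm{GL}_4(\mathbb{F}_2)$ together with the exceptional isomorphisms $\mathrm{GL}_4(\mathbb{F}_2)\cong\mathrm{A}_8$ and $\mathrm{Sp}_4(\mathbb{F}_2)\cong \mathrm{S}_6$. The genuinely hard part, however, is not this upper bound at all: it is the companion fact that the value $720$ is actually \emph{attained}, and by a unique surface over $\mathbb{F}_2$. That existence and uniqueness statement, which is the content of Theorem \ref{th:15S_6}, requires producing an explicit cubic over $\mathbb{F}_2$ equipped with a regular $\mathrm{S}_6$-action and verifying no other surface has such symmetry; the representation-theoretic counting above gives no such surface and so must be supplemented by a separate geometric construction.
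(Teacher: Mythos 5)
Your proof is correct and follows essentially the same route as the paper: both arguments combine the faithful embeddings $\mathrm{Aut}(S)\hookrightarrow\mathrm{PGL}_4(\mathbb{F}_2)$ (via the anticanonical embedding) and $\mathrm{Aut}(S)\hookrightarrow W(\mathrm{E}_6)$ with the ATLAS list of maximal subgroups of $\mathrm{PGL}_4(\mathbb{F}_2)$, and settle the equality case by noting that $\mathrm{S}_6$ is the unique maximal subgroup whose order is divisible by $720$. The only difference is organizational—you take $\gcd(20\,160,\,51\,840)=2\,880$ globally and then exclude the divisors $960,1\,440,2\,880$ by divisibility against maximal subgroup orders, whereas the paper intersects $|W(\mathrm{E}_6)|$ with each candidate maximal overgroup separately—which is a mathematically equivalent rearrangement of the same inputs.
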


\begin{proof}
The group $\text{Aut}(S)$ is contained in the group $\text{PGL}_4(\mathbb{F}_2),$ because the embedding~\mbox{$S \subset \mathbb{P}^3$} is given by the linear system $|-K_S|$ which is invariant under the automorphism group $\text{Aut}(S).$ On the other hand, the group $\text{Aut}(S)$ is contained in the Weyl group~$W(\text{E}_6).$ Suppose that there is a cubic surface $S$ such that~$720<|\text{Aut}(S)|.$

The maximal subgroup in the group $\text{PGL}_4(\mathbb{F}_2)$  is isomorphic to one of the following groups (see the list of the maximal subgroups of $\text{PGL}_4(\mathbb{F}_2)$ in~\cite[p.~22]{Atlas}):
$$
\mathrm{A}_7,  \; (\mathbb{Z}/2\mathbb{Z})^3 \rtimes   \text{PGL}_3(\mathbb{F}_2), \; \mathrm{S}_6, \; (\mathbb{Z}/2\mathbb{Z})^4 \rtimes   (\mathrm{S}_3 \times \mathrm{S}_3),\;(\mathrm{A}_5 \times \mathbb{Z}/3\mathbb{Z}) \rtimes \mathbb{Z}/2\mathbb{Z}.
$$

\noindent Therefore, we get that~$\text{Aut}(S)$  is either isomorphic to $\text{PGL}_4(\mathbb{F}_2),$ or to a subgroup in  $\mathrm{A}_7,$ or to a subgroup in $(\mathbb{Z}/2\mathbb{Z})^3 \rtimes   \text{PGL}_3(\mathbb{F}_2).$

The first case is impossible, as $|\text{PGL}_4(\mathbb{F}_2)|$ does not divide $|W(\text{E}_6)|.$ Assume that the second case holds, i.e. the order of the group $\text{Aut}(S)$ divides both the order of the group~$\mathrm{A}_7,$ and the order of the group $W(\text{E}_6),$ that is
$$
|\mathrm{Aut}(S)| \leqslant \gcd(|\mathrm{A}_7|,|W(\mathrm{E}_6)|)=2^3 \cdot 3^2 \cdot 5=360<720.
$$
\noindent Thus, this case is also impossible.

Finally, assume that the third case holds. In other words, we obtain
$$
|\text{Aut}(S)| \leqslant \gcd (|(\mathbb{Z}/2\mathbb{Z})^3 \rtimes   \text{PGL}_3(\mathbb{F}_2)|,|W(\text{E}_6)|)=2^6 \cdot 3 =192<720.
$$

\noindent Therefore, the order of the automorphism group of a smooth cubic surface is at most~$720.$

Using~\cite[p. 22]{Atlas}, we get that the only subgroup of order $720$ in $\text{PGL}_4(\mathbb{F}_2)$ is the group $\mathrm{S}_6,$ because it is the maximal subgroup and the other maximal subgroups do not contain subgroups of order $720.$

\end{proof}

Let us construct the cubic surface with automorphism group $\mathrm{S}_6$.

\begin{Example}\label{example720}
 Let us consider the cubic surface $S \subset \mathbb{P}^3$ over $\mathbb{F}_2$ which is defined by the equation
\begin{equation}\label{cubicsmall}
x^2t+y^2z+z^2y+t^2x=0.
\end{equation}

\noindent It is obvious that this equation defines a smooth cubic surface. Moreover, it passes through every point of $\mathbb{P}^3.$ We show that $\mathrm{Aut}(S) \simeq \mathrm{S}_6.$

Let us consider the matrix
$$
\Omega=
\begin{pmatrix}
0 & 0 & 0 & 1\\
0 & 0 & 1 & 0\\
0 & 1 & 0 & 0\\
1 & 0 & 0 & 0
\end{pmatrix}.
$$

\noindent This matrix corresponds to a skew-symmetric bilinear form. Consider the group which fixes this matrix. It consists of the elements~\mbox{$g \in \mathrm{PGL}_4(\mathbb{F}_2)$} such that 
$$
g^{T}\Omega g=\Omega.
$$
\noindent This group is isomorphic to~{$\mathrm{PSp}_4(\mathbb{F}_2).$}

Note that the left-hand side of (\ref{cubicsmall}) is equal to 
$$
(x^2,y^2,z^2,t^2)^{T}\Omega(x,y,z,t).
$$

\noindent So the group $\mathrm{PSp}_4(\mathbb{F}_2)$ is contained in the automorphism group of $S.$ There is a well-known isomorphism $\text{PSp}_4(\mathbb{F}_2) \simeq \mathrm{S}_6$ (see, for instance,~\mbox{\cite[\S 5]{Diedonne}}). Thus, we get the inclusion $\mathrm{S}_6 \subset \text{Aut}(S).$ But by Theorem~\ref{th:maxS_6} the group~$\mathrm{S}_6$ is the maximal possible automorphism group of a smooth cubic surface over $\mathbb{F}_2.$ Therefore, we have the isomorphism~\mbox{$\text{Aut}(S) \simeq \mathrm{S}_6.$}

Note that the cubic surface \eqref{cubicsmall} is rational. Indeed, there are two  skew lines $l_1$ and $l_2$ defined by the equations $x=y=0$ and $z=t=0,$ respectively. This gives us a birational isomorphism $\mathbb{P}^1 \times \mathbb{P}^1 \dashrightarrow S.$

\end{Example}

\begin{Remark}
In the article~\cite{Fatma} by the computer calculations it was shown that the automorphism group of the cubic surface  (\ref{cubicsmall}) is of order $720.$ However, the structure of the automorphism group was not studied.
\end{Remark}

\section{Del Pezzo surfaces of degree $1$}

In this section we consider smooth del Pezzo surfaces of degree $1.$

\begin{Prop}\label{dP1}
Let $S$ be a smooth del Pezzo surface of degree $1$ over $\mathbb{F}_q.$ Then the order of the group $\textnormal{Aut}(S)$ satisfies the inequality
$$
\textnormal{Aut}(S) \leqslant 2q^4(q-1)^2(q+1) < q^3(q^3-1)(q^2-1).
$$

\end{Prop}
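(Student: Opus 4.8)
The plan is to control $G:=\mathrm{Aut}(S)$ through the bicanonical map rather than through the anticanonical embedding directly. Since $S$ is a del Pezzo surface of degree $1$, one has $(-2K_S)^2=4$ and $h^0(S,-2K_S)=4$, and the linear system $|-2K_S|$ defines a finite morphism $\pi\colon S\to Q\subset\mathbb{P}^3$ of degree $2$ onto a quadric cone $Q$. The vertex $v$ of $Q$ is the image of the unique base point $p_0$ of $|-K_S|$, and $v$ is the only singular point of $Q$. The cover $\pi$ is Galois with Galois group $\langle\beta\rangle$ generated by the Bertini involution; on the standard weighted model $S\subset\mathbb{P}(1,1,2,3)$, writing the equation as $z^2+(a_1y+a_3)z=f(x_0,x_1,y)$ with $z$ the coordinate of weight $3$, the map $\pi$ is the projection away from $z$ and $\beta$ is the involution exchanging the two roots in $z$ (an honest involution in every characteristic, being Artin--Schreier when $\mathrm{char}=2$).

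Because $G$ preserves $-K_S$ and hence $|-2K_S|$, it acts on the image $Q$, giving a homomorphism $\rho\colon G\to\mathrm{Aut}(Q)$. Its kernel consists exactly of the deck transformations of $\pi$, i.e.\ $\ker\rho=\langle\beta\rangle$, which has order $2$. Therefore $|G|\le 2\,|\mathrm{Aut}(Q)|$, and it remains to bound $|\mathrm{Aut}(Q)|$. Since $v$ is the unique singular point of $Q$, every automorphism of $Q$ fixes it. I would identify $Q$ with the well-formed weighted projective plane $\mathbb{P}(1,1,2)$, with coordinates $u,v$ of weight $1$ and $w$ of weight $2$, for which $\mathrm{Aut}(Q)$ is the group of graded automorphisms: $(u,v)\mapsto A\,(u,v)^{T}$ with $A\in\mathrm{GL}_2(\mathbb{F}_q)$, and $w\mapsto\delta w+Q_0(u,v)$ with $\delta\in\mathbb{F}_q^{*}$ and $Q_0$ a binary quadratic form, taken modulo the diagonal scaling $\mathbb{G}_m$. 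Counting these gives
$$|\mathrm{Aut}(Q)|=\frac{|\mathrm{GL}_2(\mathbb{F}_q)|\cdot(q-1)\cdot q^3}{q-1}=q^3|\mathrm{GL}_2(\mathbb{F}_q)|=q^3(q^2-1)(q^2-q)=q^4(q-1)^2(q+1),$$
so that $|G|\le 2q^4(q-1)^2(q+1)$.

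Finally I would verify the strict inequality $2q^4(q-1)^2(q+1)<q^3(q^3-1)(q^2-1)$. Dividing both sides by the common factor $q^3(q^2-1)=q^3(q-1)(q+1)$, this reduces to $2q(q-1)<q^3-1$, i.e.
$$q^3-2q^2+2q-1=(q-1)(q^2-q+1)>0,$$
which holds for every $q\ge 2$. This yields both the order bound and the claimed strict inequality.

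The step I expect to require the most care is the uniform treatment of the double cover $\pi$ across all characteristics: I must check that $[0:0:0:1]\notin S$ so that $\pi$ is everywhere defined, that $\mathrm{im}\,\pi$ is genuinely the singular quadric (cone) and not a smooth one, and---most importantly---that $\pi$ is a Galois cover of degree $2$ with $\ker\rho$ exactly $\langle\beta\rangle$ in characteristic $2$, where a naive equation $z^2=\cdots$ would be inseparable but the correct model carries the extra term $(a_1y+a_3)z$ making $\pi$ the separable Artin--Schreier involution $z\mapsto z+a_1y+a_3$. A secondary point to justify cleanly is that $\mathrm{Aut}(\mathbb{P}(1,1,2))$ coincides with its graded automorphism group, which is precisely the source of the factor $q^3|\mathrm{GL}_2(\mathbb{F}_q)|$.
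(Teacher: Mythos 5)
Your proposal is correct and takes essentially the same route as the paper: it bounds $\mathrm{Aut}(S)$ through the degree-$2$ bicanonical map to $\mathbb{P}(1,1,2)$, with kernel of order (at most) $2$, and then computes $|\mathrm{Aut}(\mathbb{P}(1,1,2))|=q^3\,|\mathrm{GL}_2(\mathbb{F}_q)|=q^4(q-1)^2(q+1)$ by the same description of graded automorphisms. Your extra care about separability in characteristic $2$ (the Bertini involution being Artin--Schreier) is exactly the content of the paper's remark following the proposition, and is not needed for the stated bound, which only uses $|\ker|\leqslant 2$.
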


\begin{proof}

A degree $2$ regular map
\begin{equation}\label{eq:-2K_S}
\phi_{|-2K_S|}:S \to \mathbb{P}(1,1,2)
\end{equation}
\noindent defined by the linear system $|-2K_S|$ gives us the following exact sequence:
$$
1 \to G \to \text{Aut}(S) \to \text{Aut}(\mathbb{P}(1,1,2)),
$$

\noindent where $|G| \leqslant 2.$ Let us find the automorphism group $\text{Aut}(\mathbb{P}(1,1,2)).$ 
It is obvious (see~\cite[\S 8.3]{autwps}) that the automorphisms of $\mathbb{P}(1,1,2)$ can be written in the following way:
$$
[x:y:z] \mapsto [ax+by:cx+dy:ez+f(x,y)],
$$

\noindent where the matrix
$$
\begin{pmatrix}
a & b\\
c & d
\end{pmatrix}
$$

\noindent is non-degenerate, $e \in \mathbb{F}^*_{q},$ and $f(x,y)$ is a homogeneous polynomial of degree $2$. So we obtain
$$
\text{Aut}(\mathbb{P}(1,1,2)) \simeq \frac{\left(\text{GL}_2(\mathbb{F}_{q}) \times \mathbb{F}_{q}^*\right)\ltimes \left(\mathbb{F}_{q}\right)^3}{\mathbb{F}_{q}^*}.
$$

\noindent It follows that the order of $\text{Aut}(S)$ satisfies the inequality
$$
|\text{Aut}(S)| \leqslant 2\cdot|\text{Aut}(\mathbb{P}(1,1,2))|=2q^4(q-1)^2(q+1) < q^3(q^3-1)(q^2-1).
$$

\end{proof}

\begin{Remark}
It can be shown that the morphism (\ref{eq:-2K_S}) is separable (see, for example,~\cite[Proposition~1.2]{Dolgachev2}). Therefore, in the proof of Proposition~\ref{dP1} the group~$G$ is isomorphic to $\mathbb{Z}/2\mathbb{Z}.$
\end{Remark}

\section{Proof of Theorem \ref{jordantheorem}}
In this section we prove Theorem \ref{jordantheorem}. First of all, let us remind a result from the article \cite{PS}:

\begin{Prop}[{\cite[Corollary 5.3]{PS}}]\label{corformPS}
Let $S$ be a smooth del Pezzo surface of degree not equal to $1$ or $3$ over $\mathbb{F}_q.$ Then the group $\mathrm{Aut}(S)$ contains a normal abelian subgroups of index at most $q^3(q^2-1)(q^3-1).$
\end{Prop}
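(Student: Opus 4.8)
The plan is to argue degree by degree, writing $d$ for the degree of $S$ and setting $J=q^3(q^2-1)(q^3-1)=|\mathrm{PGL}_3(\mathbb{F}_q)|$ for the target bound. The key elementary observation is that the trivial subgroup is normal and abelian of index $|\mathrm{Aut}(S)|$, so in any degree for which one can show $|\mathrm{Aut}(S)|\le J$ there is nothing to prove. The genuine work is therefore confined to the two smallest admissible degrees $d=4$ and $d=2$, where $|\mathrm{Aut}(S)|$ can exceed $J$ once $q=2$ (at which point $J=168$).

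For the large degrees I would bound $|\mathrm{Aut}(S)|$ directly. For $d=9$ the surface is $\mathbb{P}^2$ (a finite field has trivial Brauer group, so there are no nontrivial Severi--Brauer forms) and $\mathrm{Aut}(S)=\mathrm{PGL}_3(\mathbb{F}_q)$ has order exactly $J$. For $d=8$ (that is, $\mathbb{P}^1\times\mathbb{P}^1$, its quadric twists, or $\mathbb{F}_1$), as well as for $d=7$ and $d=6$, the automorphism group sits inside a linear group — the stabiliser of a point or of a configuration of points and lines in $\mathbb{P}^2$, or a two-dimensional torus extended by $W(\mathrm{A}_2\times\mathrm{A}_1)$ — and a short computation gives $|\mathrm{Aut}(S)|<J$; in the $d=6$ case one may alternatively take the maximal torus $\mathrm{Aut}^0(S)(\mathbb{F}_q)$ as the normal abelian subgroup, whose index is at most $12$. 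For $d=5$ the quintic del Pezzo surface is unique and $|\mathrm{Aut}(S)|\le|\mathrm{S}_5|=120\le 168\le J$. In all of these cases the trivial subgroup (or the torus) already suffices.

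The crux is $d=4$ and $d=2$, where I would use the embedding $\mathrm{Aut}(S)\hookrightarrow W(\mathrm{R}_d)$ arising from the action on $\mathrm{Pic}(S_{\overline{\mathbb{F}}_q})$, which for $d\le 5$ is faithful because the only automorphism acting trivially on the Picard lattice is the identity (see \cite{DolgClass}). For $d=4$ the relevant root system is $\mathrm{D}_5$ and $W(\mathrm{D}_5)=(\mathbb{Z}/2\mathbb{Z})^4\rtimes\mathrm{S}_5$ has a normal subgroup $V\cong(\mathbb{Z}/2\mathbb{Z})^4$ with quotient $\mathrm{S}_5$. Setting $A=\mathrm{Aut}(S)\cap V$ produces a normal abelian subgroup of $\mathrm{Aut}(S)$ with $\mathrm{Aut}(S)/A\hookrightarrow\mathrm{S}_5$, hence of index at most $120\le J$. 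For $d=2$ the Geiser involution $\iota$, the deck transformation of the anticanonical double cover $S\to\mathbb{P}^2$, is defined over $\mathbb{F}_q$ and maps to the central element $-1\in W(\mathrm{E}_7)$; thus $\langle\iota\rangle\cong\mathbb{Z}/2\mathbb{Z}$ is a central, hence normal, abelian subgroup, and $\mathrm{Aut}(S)/\langle\iota\rangle$ is identified with the group of projective automorphisms preserving the branch quartic, of order at most $168\le J$.

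The main obstacle — and the reason these two cases should be treated by group theory inside $W(\mathrm{R}_d)$ rather than by the naive geometric normal forms — is the characteristic $2$ behaviour combined with the tightness of the bound at $q=2$: in characteristic $2$ one can neither diagonalise the pencil of quadrics defining the degree $4$ surface nor describe the branch locus of the degree $2$ surface in the usual way, so the existence and normality of $A$ and of $\langle\iota\rangle$ are best extracted from the Weyl-group picture, which is insensitive to the characteristic. The most delicate point to verify is the estimate $|\mathrm{Aut}(S)/\langle\iota\rangle|\le 168$ in characteristic $2$, which I would deduce from the list of subgroups of $W(\mathrm{E}_7)$ realisable on a degree $2$ del Pezzo surface rather than from the Hurwitz bound, since the latter can fail in positive characteristic.
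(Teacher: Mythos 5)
The first thing to say is that the paper does not prove this proposition at all: it is imported verbatim from \cite[Corollary 5.3]{PS}, so there is no internal argument to compare yours against. The closest internal analogue is Proposition~\ref{dP1}, where the paper handles degree $1$ by the double-cover trick. Judged on its own terms, your skeleton is sound: the reduction to the trivial subgroup in degrees $9,8,7,6,5$ works (the stated bounds are all easy to verify, and in degree $6$ the torus indeed gives a normal abelian subgroup of index at most $12$), and your degree $4$ argument --- embed $\mathrm{Aut}(S)$ into $W(\mathrm{D}_5)=(\mathbb{Z}/2\mathbb{Z})^4\rtimes\mathrm{S}_5$ using faithfulness of the Picard action in degree $\leqslant 5$, intersect with the normal subgroup $(\mathbb{Z}/2\mathbb{Z})^4$, and bound the quotient by $|\mathrm{S}_5|=120\leqslant 168$ --- is correct and complete.

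The degree $2$ case, however, contains a genuine misstep. Your claim that $\mathrm{Aut}(S)/\langle\iota\rangle$ has order at most $168$ is false as a general statement (in characteristic $3$, for instance, a plane quartic can have projective automorphism group $\mathrm{PGU}_3(\mathbb{F}_3)$ of order $6048$), and your proposed repair --- extracting both the existence of $\iota$ and the bound from the Weyl group --- cannot work even in principle: an element of $W(\mathrm{E}_7)$ is a lattice isometry, not a regular automorphism of $S$, so the Weyl-group picture cannot produce $\iota$; and intersecting $\mathrm{Aut}(S)$ with the centre $\{\pm 1\}$ of $W(\mathrm{E}_7)$ only bounds the quotient by $|\mathrm{Sp}_6(\mathbb{F}_2)|=1\,451\,520$, hopelessly far above $168$. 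What you are missing is that the bound $168$ is never needed. The anticanonical map $\phi\colon S\to\mathbb{P}^2$ is defined over $\mathbb{F}_q$ and $\mathrm{Aut}(S)$-equivariant, so it induces a homomorphism $\mathrm{Aut}(S)\to\mathrm{PGL}_3(\mathbb{F}_q)$ whose kernel $K$ consists of deck transformations of a finite morphism of degree $2$; hence $|K|\leqslant 2$ (if $\phi$ were inseparable the kernel would simply be trivial), $K$ is normal and abelian, and $\mathrm{Aut}(S)/K$ embeds into $\mathrm{PGL}_3(\mathbb{F}_q)$, whose order is exactly $q^3(q^2-1)(q^3-1)$. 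This disposes of degree $2$ in every characteristic with no Geiser involution, no branch quartic, no Hurwitz bound and no Weyl group --- it is precisely the argument the paper itself runs for degree $1$ in Proposition~\ref{dP1}. (For completeness: the Geiser involution does exist in characteristic $2$, because an inseparable anticanonical double cover $z^2=f_4$ is forced to be singular --- $df_4$ is a section of $\Omega^1_{\mathbb{P}^2}(4)$, whose second Chern number equals $7\neq 0$, cf.~\cite{Dolgachev2} --- but your proof should not be made to depend on this.)
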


Now we prove our main theorem.
\begin{proof}[Proof of Theorem \ref{jordantheorem}]

As it was mentioned above, any finite subgroup $G$ of the Cremona group $\mathrm{Cr}_2(\mathbb{F}_q)$ is regularized on a $G$-minimal model of a rational surface. Thus, for all automorphism groups of del Pezzo surfaces and conic bundles over~$\mathbb{P}^1$ it is enough to find the minimal index of a normal abelian subgroup.  The maximal among such numbers is the required Jordan constant.

According to Proposition \ref{corformPS}, a normal abelian subgroup in the automorphism group of a del Pezzo surface of degree not equal to $1$ or $3$ over $\mathbb{F}_q$ has index at most~$q^3(q^2-1)(q^3-1).$ And according to~\cite[Lemma 6.1]{PS}, a normal abelian subgroup in the automorphism group of a conic bundle over $\mathbb{P}^1$  also has index at most~$q^3(q^2-1)(q^3-1).$ By Proposition \ref{dP1} the automorphism group of a smooth del Pezzo surface of degree $1$ over $\mathbb{F}_q$ is of order at most~$q^3(q^2-1)(q^3-1).$

It is enough to find the maximum of all minimal indexes of normal abelian subgroups in every automorphism group for smooth cubic surfaces. If $q=4\; \text{or}\;8$ then 
$$
|W(\text{E}_6)|=51\,840<60\,480=|\text{PGL}_3(\mathbb{F}_4)|\leqslant|\text{PGL}_3(\mathbb{F}_q)|,
$$
\noindent therefore, the order of the automorphism group of a smooth cubic surface is less than~\mbox{$q^3(q^2-1)(q^3-1).$} And if $q=2$ then by Theorem \ref{th:maxS_6} the order of the automorphism group of a smooth cubic surface is at most
$$
720>168=q^3(q^2-1)(q^3-1).
$$

\noindent All these values, i.e. $720$ for $\mathbb{F}_2$ and $q^3(q^2-1)(q^3-1)$ for $\mathbb{F}_4$ and $\mathbb{F}_8$ are realized as orders of the automorphism groups of some rational surfaces. Indeed, for~\mbox{$q=4\; \text{and}\;8$} these values are attained on the automorphism group of $\mathbb{P}^2.$ This group is isomorphic to  
$\text{PGL}_3(\mathbb{F}_q)$ and it does not have non-trivial normal abelian subgroups (see~~\cite[Lemma 2.4]{PS}). For $q=2$ the value of the Jordan constant is attained on the rational cubic surface with the automorphism group isomorphic to~$\mathrm{S}_6$ (see Example~\ref{example720}). This group does not have non-trivial normal abelian subgroups either.

\end{proof}

\appendix
\section{Uniqueness of the cubic surface with the automorphism group~$\mathrm{S}_6$ over $\mathbb{F}_2$}

In this section we prove Theorem \ref{th:15S_6}.

\begin{Lemma}\label{lemma:15points}
Let $S$ be a smooth cubic surface in the projective space $\mathbb{P}^3$ over $\mathbb{F}_2,$ such that it passes through all $15$ points in $\mathbb{P}^3.$ Then $S$ is isomorphic to a cubic surface~(\ref{cubicsmall}) and its automorphism group is  $\mathrm{S}_6.$
\end{Lemma}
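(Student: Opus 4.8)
The plan is to realize the space of cubic forms vanishing at all $15$ points of $\mathbb{P}^3(\mathbb{F}_2)$ as a single $6$-dimensional representation of $\mathrm{PGL}_4(\mathbb{F}_2)=\mathrm{GL}_4(\mathbb{F}_2)$ equipped with a natural quadratic form, and then to read off both smoothness and the orbit structure from that form. First I would pin down this space $V$. Over $\mathbb{F}_2$ a point of $\mathbb{P}^3$ is simply a nonzero vector, and since each coordinate satisfies $x_i^2=x_i$ on such a vector, the cubic monomials $x_i^3$, $x_i^2x_j$, $x_ix_jx_k$ evaluate to the square-free monomial functions $x_i$, $x_ix_j$, $x_ix_jx_k$. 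These $14$ functions are linearly independent inside the $16$-dimensional algebra of functions $\mathbb{F}_2^4\to\mathbb{F}_2$, so the evaluation map from the $20$-dimensional space of cubic forms has rank $14$ and $\dim V=6$. The only collisions among evaluations are $x_i^2x_j\equiv x_ix_j\equiv x_j^2x_i$, so $V$ is spanned by the six forms $f_{ij}:=x_i^2x_j+x_ix_j^2=x_ix_j(x_i+x_j)$ with $0\leqslant i<j\leqslant 3$.

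Next I would make the representation-theoretic structure explicit. Writing $W=\mathbb{F}_2^4$, the pairing $B(\ell,m):=\ell m(\ell+m)$ on linear forms is alternating and $\mathbb{F}_2$-bilinear (using $(\ell+m)^2=\ell^2+m^2$ in characteristic $2$), so it induces a $\mathrm{GL}(W)$-equivariant isomorphism $\Lambda^2 W^*\xrightarrow{\sim}V$ sending $x_i\wedge x_j$ to $f_{ij}$. Thus $V\cong\Lambda^2 W^*$ as $\mathrm{GL}_4(\mathbb{F}_2)$-modules, and I write $\omega=\sum_{i<j}c_{ij}\,x_i\wedge x_j$ for the $2$-form whose cubic is $F_\omega=\sum_{i<j}c_{ij}f_{ij}$. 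The Pfaffian $Q(\omega)=c_{01}c_{23}+c_{02}c_{13}+c_{03}c_{12}$ is a $\mathrm{GL}_4(\mathbb{F}_2)$-invariant quadratic form on $V$, and a nonzero $\omega$ has rank $2$ (is decomposable, $\omega=\ell\wedge m$) precisely when $Q(\omega)=0$, and rank $4$ precisely when $Q(\omega)=1$. A direct count of solutions of $c_{01}c_{23}+c_{02}c_{13}+c_{03}c_{12}=1$ gives $28$ anisotropic vectors and hence $35$ nonzero isotropic ones, so the $63$ nonzero cubics in $V$ split as $28+35$.

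I would then separate smooth from singular using $Q$. If $Q(\omega)=0$ then $\omega=\ell\wedge m$, so $F_\omega=\ell m(\ell+m)$ is a union of three planes through a common line and is therefore reducible and singular; none of these $35$ surfaces is smooth. On the other hand the surface $S_0$ of \eqref{cubicsmall} corresponds to $\omega_0=x_0\wedge x_3+x_1\wedge x_2$, for which $Q(\omega_0)=1$. By Example~\ref{example720} we have $\mathrm{Aut}(S_0)\cong\mathrm{S}_6$ of order $720$, so the $\mathrm{GL}_4(\mathbb{F}_2)$-orbit of $\omega_0$ has size $|\mathrm{GL}_4(\mathbb{F}_2)|/720=20160/720=28$. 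This orbit lies in the set of $Q=1$ vectors, which has exactly $28$ elements, so the orbit is the entire anisotropic set. Consequently the smooth members of $V$ are precisely the $28$ forms with $Q=1$, they form one $\mathrm{GL}_4(\mathbb{F}_2)$-orbit, and every one of them is projectively isomorphic to $S_0$ with automorphism group $\mathrm{S}_6$; this proves the Lemma.

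The part needing the most care is the characteristic-$2$ linear algebra: that $B$ really is bilinear and alternating, and that the Pfaffian $Q$ is simultaneously $\mathrm{GL}_4(\mathbb{F}_2)$-invariant and detects decomposability in characteristic $2$, where the usual criterion $\omega\wedge\omega=0$ is useless because it holds identically. The pleasant point is that invoking the already-established value $\mathrm{Aut}(S_0)\cong\mathrm{S}_6$ turns the orbit count into a one-line argument and lets me avoid the orthogonal-group machinery (the exceptional isomorphism $\mathrm{GL}_4(\mathbb{F}_2)\cong\Omega_6^+(\mathbb{F}_2)$ together with Witt's theorem), obtaining transitivity on the $28$ anisotropic vectors essentially for free.
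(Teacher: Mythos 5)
Your proof is correct, and it rests on the same combinatorial skeleton as the paper's: the $63$ nonzero cubics spanned by the six forms $x_ix_j(x_i+x_j)$, the $35$ unions of three planes through a line as the singular members, and an orbit--stabilizer count in $\mathrm{PGL}_4(\mathbb{F}_2)$ with $20160/720=28$. The logical direction, however, is different. The paper takes an \emph{arbitrary} smooth cubic $S$ through the $15$ points, bounds its orbit by $63-35=28$, deduces $|\mathrm{Aut}(S)|\geqslant 20160/28=720$, and only then invokes Theorem~\ref{th:maxS_6} to force equality, $\mathrm{Aut}(S)\simeq\mathrm{S}_6$, and transitivity on the smooth members; Example~\ref{example720} enters only to certify that the surface \eqref{cubicsmall} is one of them. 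You instead anchor everything on the single surface $S_0$ of Example~\ref{example720} (legitimate and non-circular, since that Example depends on Theorem~\ref{th:maxS_6} but not on this Lemma), compute its orbit to have exactly $28$ elements, and show the orbit exhausts everything that is not a union of three planes. You also supply two things the paper does not: a genuine proof that the cubics through all $15$ points form exactly the $6$-dimensional space spanned by the $f_{ij}$ (the paper asserts this normal form without argument), and the equivariant identification $V\simeq\Lambda^2 W^*$ with its invariant Pfaffian $Q$, which exhibits the split $63=35+28$ as the isotropic/anisotropic dichotomy. Note that $Q$ is not strictly needed for the conclusion: once the $35$ decomposable forms are known to give singular surfaces and the orbit of $S_0$ consists of $28$ smooth ones, the count $35+28=63$ already forces that orbit to be the full complement. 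In exchange for relying on Example~\ref{example720}, your route yields a sharper structural statement --- the smooth cubics through all rational points are precisely the anisotropic vectors of an invariant quadratic form on $\Lambda^2 W^*$, forming a single $\mathrm{PGL}_4(\mathbb{F}_2)$-orbit --- while the paper's route is more economical, obtaining the lower bound $|\mathrm{Aut}(S)|\geqslant 720$ for every candidate surface simultaneously without computing any automorphism group in advance.
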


\begin{proof}
Note that the cubic surface passing through all points in $\mathbb{P}^3$ can be defined by the following equation:
$$
a_1xy(x+y)+a_2xz(x+z)+a_3xt(x+t)+a_4yz(y+z)+a_5yt(y+t)+a_6zt(z+t)=0,
$$

\noindent where $a_i \in \mathbb{F}_2.$ Thus, there are $63$ such cubic surface, not necessarily smooth.

Let us consider the action of the group $\mathrm{PGL}_4(\mathbb{F}_2)$ on smooth cubic surfaces in~$\mathbb{P}^3$ over $\mathbb{F}_2,$ which pass through $15$ points. Denote by $\mathrm{Orb}(S)$ the orbit of  $S$ under the action of $\mathrm{S}_6.$ We have 
$$
\frac{|\mathrm{PGL}_4(\mathbb{F}_2)|}{|\mathrm{Aut}(S)|}=|\mathrm{Orb}(S)|.
$$

\noindent Let us find the number of elements in $\mathrm{Orb}(S).$ Consider a singular cubic surface which is a union of three hyperplanes in $\mathbb{P}^3$ passing through one line. It is obvious that  there are $15$ points on such a cubic surface. It is not hard to see that there are exactly~$35$ lines in $\mathbb{P}^3$ over $\mathbb{F}_2.$ It follows that there are at least $35$ such singular cubic surfaces. Thus, there are at most $28$ smooth cubic surfaces passing through~$15$ points. Therefore, we obtain $|\mathrm{Orb}(S)| \leqslant 28.$ So we have the inequality
$$
|\mathrm{Aut}(S)| \geqslant 720.
$$
According to Theorem \ref{th:maxS_6}, this inequality holds true if and only if  $\mathrm{Aut}(S)=\mathrm{S}_6$ and~\mbox{$|\mathrm{Orb}(S)| = 28.$} In other words, all smooth cubic surfaces passing through $15$ points are isomorphic to each other. As the cubic surface of the form (\ref{cubicsmall}) passes through $15$ points, we get that $S$ is isomorphic to the cubic surface defined by the equation~(\ref{cubicsmall}). 

\end{proof}

\begin{Lemma}\label{lemma:cubicsmall}
Assume that the group $\mathrm{S}_6$ is the automorphism group of a smooth cubic surface~$S$ over $\mathbb{F}_2.$ Then $S$ is isomorphic to a cubic surface defined by the equation~(\ref{cubicsmall}).
\end{Lemma}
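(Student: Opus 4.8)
The plan is to reduce the statement to Lemma~\ref{lemma:15points} by showing that a smooth cubic surface $S$ with $\mathrm{Aut}(S) \simeq \mathrm{S}_6$ must pass through all $15$ points of $\mathbb{P}^3$ over $\mathbb{F}_2$. To achieve this I would first pin down the way in which $\mathrm{S}_6$ sits inside $\mathrm{PGL}_4(\mathbb{F}_2)$, and then combine the transitivity of this action on points with the existence of an $\mathbb{F}_2$-point on $S$.

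First I would recall from the proof of Theorem~\ref{th:maxS_6} that, since the embedding $S \subset \mathbb{P}^3$ is given by $|-K_S|$, the group $\mathrm{Aut}(S) \simeq \mathrm{S}_6$ is realized as a linear subgroup of $\mathrm{PGL}_4(\mathbb{F}_2)$ acting on $\mathbb{P}^3$. Using the list of maximal subgroups in~\cite[p.~22]{Atlas} together with the exceptional isomorphism $\mathrm{PGL}_4(\mathbb{F}_2) \simeq \mathrm{A}_8$, there is a single conjugacy class of subgroups isomorphic to $\mathrm{S}_6$; equivalently, every such $\mathrm{S}_6$ preserves a nondegenerate alternating form on $\mathbb{F}_2^4$ and is thus conjugate to the symplectic group $\mathrm{PSp}_4(\mathbb{F}_2)$ appearing in Example~\ref{example720}. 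Since conjugating the embedding $S \subset \mathbb{P}^3$ by an element of $\mathrm{PGL}_4(\mathbb{F}_2)$ does not change the isomorphism class of $S$, I may therefore assume $\mathrm{Aut}(S) = \mathrm{PSp}_4(\mathbb{F}_2)$.

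Next I would invoke the action on $\mathbb{F}_2$-points. By Witt's theorem the symplectic group $\mathrm{PSp}_4(\mathbb{F}_2)$ acts transitively on the $15$ nonzero vectors of $\mathbb{F}_2^4$, hence transitively on the $15$ points of $\mathbb{P}^3$ over $\mathbb{F}_2$. On the other hand, $S$ carries at least one $\mathbb{F}_2$-rational point, since a cubic form in $4 > 3$ variables over a finite field has a nontrivial zero by the Chevalley--Warning theorem. As $S$ is invariant under $\mathrm{Aut}(S)$ and this group permutes all $15$ points in a single orbit, $S$ must contain every $\mathbb{F}_2$-point of $\mathbb{P}^3$. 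Lemma~\ref{lemma:15points} then yields that $S$ is isomorphic to the cubic surface defined by~(\ref{cubicsmall}), as desired.

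The main obstacle is the first step: identifying the abstract group $\mathrm{S}_6 = \mathrm{Aut}(S)$ with the concrete symplectic subgroup $\mathrm{PSp}_4(\mathbb{F}_2)$ up to conjugacy. A priori an $\mathrm{S}_6$ inside $\mathrm{PGL}_4(\mathbb{F}_2)$ could act on the $15$ points with several orbits, since orbit sizes dividing $720$ and summing to $15$ are combinatorially possible, so transitivity is not automatic. It must be extracted either from the uniqueness of the conjugacy class of $\mathrm{S}_6$ recorded in the Atlas or from a direct verification that the relevant $4$-dimensional $\mathbb{F}_2$-representation of $\mathrm{S}_6$ is the symplectic one. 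Everything after that---transitivity, the rational point, and the appeal to Lemma~\ref{lemma:15points}---is routine.
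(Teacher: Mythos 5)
Your proof is correct, but it reaches the key intermediate claim --- that $S$ must contain all $15$ points of $\mathbb{P}^3$ over $\mathbb{F}_2$ --- by a genuinely different mechanism than the paper. Both arguments share the same skeleton: the Chevalley--Warning theorem produces one $\mathbb{F}_2$-point on $S$, one then shows that $\mathrm{S}_6$-invariance forces all $15$ points onto $S$, and Lemma~\ref{lemma:15points} finishes. The paper, however, never identifies which copy of $\mathrm{S}_6$ sits inside $\mathrm{PGL}_4(\mathbb{F}_2)$: it runs an orbit-length analysis, showing that an orbit of length $l \neq 5,10,15$ would have a point stabilizer of order divisible by $5$ (since $\gcd(l,5)=1$), hence containing $\mathbb{Z}/5\mathbb{Z}$, which by~\cite[Theorem 3.7]{ChenShramov} would act faithfully on the tangent space and embed $\mathbb{Z}/5\mathbb{Z}$ into $\mathrm{GL}_3(\mathbb{F}_2)$ of order $168$ --- a contradiction; the cases $l=5$ and $l=10$ are then excluded because $\mathrm{S}_6$ has no subgroup of index $5$. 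Your route instead classifies the subgroup up to conjugacy: every subgroup of order $720$ in $\mathrm{PGL}_4(\mathbb{F}_2) \simeq \mathrm{A}_8$ is maximal (Lagrange rules out containment in the other maximal subgroups, exactly as in the proof of Theorem~\ref{th:maxS_6}), the Atlas records a single class of maximal $\mathrm{S}_6$'s, so $\mathrm{Aut}(S)$ is conjugate to the group $\mathrm{PSp}_4(\mathbb{F}_2)$ of Example~\ref{example720}, which acts transitively on the $15$ points since the symplectic group is transitive on nonzero vectors. Both are sound; you are also right to flag that transitivity is the non-trivial point, since non-transitive orbit decompositions of $15$ are combinatorially conceivable. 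Your argument is shorter and makes the transitivity transparent, at the cost of leaning on Atlas conjugacy-class data and the identification of the $4$-dimensional representation as the symplectic one; the paper's argument needs neither --- only elementary group theory of $\mathrm{S}_6$ and faithfulness of stabilizer actions on tangent spaces --- so it is more self-contained, and its local method would still function in settings where a clean conjugacy classification is unavailable.
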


\begin{proof}
Note that according to the Chevalley--Warning theorem, there is a point on the cubic surface $S.$ Denote by $p$  one of such points. The action of the group~$\mathrm{S}_6$ on $S$ defines its action on $\mathbb{P}^3.$ Assume that the length of the orbit of the point~$p$ is~$l \neq 5,10,15.$ So the stabilizer of every point in the orbit is a subgroup of $\mathrm{S}_6$ of index $l.$ Moreover, the stabilizer contains a subgroup $\mathbb{Z}/5\mathbb{Z},$ because $l$ and $5$ are coprime.  So the group~$\mathbb{Z}/5\mathbb{Z}$ acts non-trivially on the tangent space to  $\mathbb{P}^3$ at the point $p$ (see, for instance,~\cite[Theorem 3.7]{ChenShramov}). This means that~$\mathbb{Z}/5\mathbb{Z} \subset  \mathrm{GL}_3(\mathbb{F}_2).$ But it is impossible, because  $|\mathrm{GL}_3(\mathbb{F}_2)|=168.$

The case $l=5$ is impossible as well, because the group $\mathrm{S}_6$ does not contain subgroups of index $5.$  If $l=10$ then there are orbits of the action of $\mathrm{S}_6$ on $\mathbb{P}^3$ of length at most $5.$ But as we have just shown, it is also impossible.

Therefore, the only possible case is $l=15.$ Thus, the cubic surface $S$ with an action of $\mathrm{S}_6$ passes through $15$ points. By Lemma \ref{lemma:15points} such cubic surfaces are isomorphic to the cubic surface of the form~(\ref{cubicsmall}).

\end{proof}

Now we prove Theorem \ref{th:15S_6}.

\begin{proof}[Proof of Theorem \ref{th:15S_6}]
The first part of the theorem follows from Theorem \ref{th:maxS_6}. The existence and uniqueness follow from Lemmas \ref{lemma:15points}~and~\ref{lemma:cubicsmall}.

\end{proof}

Also we obtain the following corollary from Lemmas \ref{lemma:15points} and \ref{lemma:cubicsmall}.

\begin{Cor}
The group $\mathrm{S}_6$ acts on a smooth cubic surface $S$ over $\mathbb{F}_2$ if and only if $S$ passes through all $15$ points in $\mathbb{P}^3.$ 
\end{Cor}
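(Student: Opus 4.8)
The plan is to prove the final Corollary, which asserts that $\mathrm{S}_6$ acts on a smooth cubic surface $S$ over $\mathbb{F}_2$ if and only if $S$ passes through all $15$ points of $\mathbb{P}^3(\mathbb{F}_2)$. Since this is a biconditional, I would prove the two implications separately, and both directions should follow almost immediately from the two lemmas already established in the appendix together with Example~\ref{example720}.

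For the ``only if'' direction, suppose $\mathrm{S}_6$ acts on a smooth cubic surface $S$. By Lemma~\ref{lemma:cubicsmall}, such an $S$ is isomorphic to the cubic surface defined by equation~(\ref{cubicsmall}). But it was already observed in Example~\ref{example720} that the surface~(\ref{cubicsmall}) passes through every point of $\mathbb{P}^3$ over $\mathbb{F}_2$, i.e.\ through all $15$ points. Transporting this property along the isomorphism (which is a projective linear change of coordinates, hence a bijection on the finite point set $\mathbb{P}^3(\mathbb{F}_2)$), I conclude that $S$ itself passes through all $15$ points.

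For the ``if'' direction, suppose conversely that $S$ is a smooth cubic surface passing through all $15$ points of $\mathbb{P}^3(\mathbb{F}_2)$. This is exactly the hypothesis of Lemma~\ref{lemma:15points}, which yields that $S$ is isomorphic to the surface~(\ref{cubicsmall}) and that its automorphism group is $\mathrm{S}_6$. In particular $\mathrm{S}_6$ acts on $S$, completing this implication.

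I do not anticipate a genuine obstacle here, since the corollary is essentially a repackaging of Lemmas~\ref{lemma:15points} and~\ref{lemma:cubicsmall}. The only point requiring a sentence of care is making the ``passing through all $15$ points'' property manifestly invariant under isomorphism in the ``only if'' direction: I would note that any isomorphism of smooth cubic surfaces in $\mathbb{P}^3$ embedded by $|-K_S|$ is induced by an element of $\mathrm{PGL}_4(\mathbb{F}_2)$, which permutes $\mathbb{P}^3(\mathbb{F}_2)$ bijectively, so a surface passes through all $15$ points if and only if any isomorphic copy does.
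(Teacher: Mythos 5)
Your proof is correct and takes essentially the same route as the paper, which states the corollary as an immediate consequence of Lemmas \ref{lemma:15points} and \ref{lemma:cubicsmall} (with Example \ref{example720} supplying the $15$-point property of the surface \eqref{cubicsmall}). One pedantic point worth a clause: Lemma \ref{lemma:cubicsmall} is stated under the hypothesis $\mathrm{Aut}(S) \simeq \mathrm{S}_6$ rather than merely ``$\mathrm{S}_6$ acts on $S$,'' but a faithful $\mathrm{S}_6$-action gives $\mathrm{S}_6 \subseteq \mathrm{Aut}(S)$, and the bound $|\mathrm{Aut}(S)| \leqslant 720$ from Theorem \ref{th:maxS_6} then forces $\mathrm{Aut}(S) \simeq \mathrm{S}_6$, so your application of the lemma is justified.
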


%
%
%

\providecommand{\bysame}{\leavevmode\hbox to3em{\hrulefill}\thinspace}
\providecommand{\MR}{\relax\ifhmode\unskip\space\fi MR }
\providecommand{\MRhref}[2]{%
  \href{http://www.ams.org/mathscinet-getitem?mr=#1}{#2}
}
\providecommand{\href}[2]{#2}

 \end{document}